\newcommand\qbi[3]{{{#1}\atopwithdelims[]{#2}}_{#3}}
\newcommand\bi[2]{{{#1}\atopwithdelims(){#2}}}
\newtheorem{thm}{Theorem}[section]
\newtheorem{defn}[thm]{Definition}
\newtheorem{prop}[thm]{Proposition}
\newtheorem{lem}[thm]{Lemma}
\numberwithin{equation}{section}
\author{B. Adamczewski, J. P. Bell, \'E. Delaygue and F. Jouhet}
\title{Congruences modulo cyclotomic polynomials and algebraic independence for $q$-series}
\date{}
\thanks{This project has received funding from the European Research Council (ERC) under the European Union's Horizon 2020 research and innovation programme under the Grant Agreement No 648132.}
\begin{document}
\maketitle

\begin{abstract}We prove congruence relations modulo cyclotomic polynomials for multisums of 
$q$-factorial ratios, therefore generalizing many well-known $p$-Lucas congruences. 
Such congruences connect  various classical 
 generating series  to their $q$-analogs. Using this, we prove a propagation phenomenon: 
when these generating series are algebraically independent, this is also the case for their $q$-analogs.
\end{abstract}

\section{Introduction and main results}\label{sec: intro}

After the seminal work of Lucas~\cite{Lu}, a great attention has been paid on congruences modulo prime numbers $p$ satisfied by various combinatorial sequences related to binomial coefficients.  
A typical example of these so-called $p$-Lucas congruences is given by:
\begin{equation}\label{centralLucasCongruence}
\bi{2(m+np)}{m+np}^r\equiv \bi{2m}{m}^r\bi{2n}{n}^r\mod p,
\end{equation}
where $0\leq m\leq p-1$, $r\geq 1$, and $n\geq 0$.  
In terms of generating series these congruences~\eqref{centralLucasCongruence} 
translate as 
\begin{equation}\label{2nnfonction}
g_r(x)\equiv A(x)g_r(x^p)\mod p\mathbb{Z}[[x]],
\end{equation}
where $$g_r(x):= \sum_{n= 0}^{\infty} {2n \choose n}^r x^n$$ and $A(x)$ is a polynomial (depending on $r$ and $p$) in $\mathbb Z[x]$ of degree at most $p-1$. 
This functional point of view led the authors of~\cite{ABD1} to define general sets of multivariate 
power series including the following one which is of particular interest for our purpose. 
  
\begin{defn}\label{def: ldrs} Let $d$ be a positive integer and ${\bf x} = (x_1,\ldots,x_d)$ 
be a vector of indeterminates.  
We let $\mathfrak{L}_d$ denote the set of all power series 
$g({\bf x})$ in $\mathbb{Z}[[{\bf x}]]$ with constant term equal to $1$ and such that for 
every prime number $p$: 

{\rm(i)}  there exist a positive integer $k$ 
and a polynomial  $A$ in $\mathbb{Z}[{\bf x}]$ 
satisfying
\begin{equation*}\label{eq cond}
g(\mathbf{x})\equiv A(\mathbf{x})g\big(\mathbf{x}^{p^{k}}\big)\mod  p\mathbb{Z}[[\mathbf{x}]].
\end{equation*}

{\rm(ii)}   $\deg_{x_i}(A)\leq p^{k}-1$ for all $i$, $1\leq i\leq d$. 

\end{defn}
Using $p$-adic computations inspired by works of Christol and Dwork, it was proved in~\cite{ABD1} that a large family of multivariate generalized hypergeometric series belongs to 
$\mathfrak L_d$. This provides, by specialization, a unified way 
to reprove most of known $p$-Lucas congruences as well as to find many new ones.  
In addition, a general method to prove algebraic independence of power series whose coefficients satisfy $p$-Lucas type congruences was developed.  
 Let us illustrate this approach with the following example. 
 In 1980, Stanley~\cite{St80} conjectured that the series $g_r$
are transcendental over $\mathbb C(x)$ except for $r=1$, in which case we have 
$g_1(x)=(\sqrt{ 1 - 4x})^{-1}$.
 He also proved the transcendence for $r$ even. The conjecture was solved independently by Flajolet~\cite{Fl} through asymptotic considerations and by Sharif and Woodcock~\cite{SW2} by using the previously mentioned Lucas congruences. Incidentally, this result is also a consequence of the interlacing criterion proved by Beukers and Heckman~\cite{BH} for generalized hypergeometric series. 
Though there are three different ways to obtain the transcendence of $g_r$ for $r\geq2$,  not much was apparently known about their algebraic independence, until Congruence \eqref{centralLucasCongruence} was used in~\cite{AB13,ABD1} to prove the following result: 
{\it all elements of the set $\left\{g_r(x)\,:\, r\geq 2 \right\}$ are algebraically independent 
over $\mathbb C(x)$}.

In the present work, we aim at generalizing the approach of~\cite{ABD1} in the setting of 
$q$-series. It started with the following observation which can be derived from \cite{Fr,Sa,St}: 
\begin{equation}\label{qLucasCongruence}
\qbi{2(m+nb)}{m+nb}{q}^r\equiv \qbi{2m}{m}{q}^r\binom{2n}{n}^r\mod \phi_b(q)\mathbb{Z}[q] \,,
\end{equation}
where $n,m,b,r$ are nonnegative integers with $b\geq 1$, $0\leq m\leq b-1$, and $\phi_b(x)$ denotes the $b$th cyclotomic polynomial over $\mathbb Q$. Here,  
for every complex number $q$, the central $q$-binomial coefficients are defined as
$$
\qbi{2n}{n}{q}:=\frac{[2n]_q!}{[n]_q!^2}\in\mathbb{N}[q],\;\mbox{where}\;[n]_q!
:=\prod_{i=1}^n\frac{1-q^i}{1-q}
$$
is the $q$-analog of $n!$. It is implicitly considered as a polynomial in $q$ so that the formula is still valid for $q=1$. In particular, one has $[n]_1!=n!$ and 
the congruence \eqref{qLucasCongruence} allows one to recover \eqref{centralLucasCongruence} 
since $\phi_p(1)=p$.  
Again in terms of generating series, \eqref{qLucasCongruence}  
 translates as
\begin{equation}\label{transfert2nn}
f_r(q;x)\equiv A(q;x)g_r(x^b)\mod \phi_b(q)\mathbb{Z}[q]\,[[x]]\,,
\end{equation}
where $A(q;x)$ is a polynomial in $\mathbb{Z}[q][x]$ of degree (in $x$) at most $b-1$ 
and we have set
$$
f_r(q;x):=\sum_{n=0}^\infty\qbi{2n}{n}{q}^rx^n.
$$
This provides an arithmetic connection between the generating series $g_r(x)$ and 
its $q$-analog $f_r(q;x)$. 
It leads us to associate a set $\mathcal{D}(q;g)$ of 
$q$-deformations with every element $g$  in $\mathfrak{L}_d$. 
We stress that $\mathcal{D}(q;g)$ is closed under $q$-derivation. 

\begin{defn}\label{def: lastdrsb} {\em Let $q$ be a fixed nonzero complex number.   
Let $g(\mathbf{x})$ be a power series in $\mathfrak{L}_d$. 
We let $\mathcal{D}(q;g)$ denote the set of all nonzero power series 
$f(q;{\bf x})$ in $\mathbb{Z}[q][[{\bf x}]]$  such that for all integers $b\geq 1$ 
there exists a polynomial $A(q;{\bf x})$ with coefficients in $\mathbb{Z}[q]$  satisfying: 
$$f(q;\mathbf{x})\equiv A(q;{\bf x}) g\big(\mathbf{x}^{b}\big) 
\mod \phi_b(q)\mathbb{Z}[q][[\mathbf{x}]].
$$}
\end{defn}

Our first result shows a propagation phenomenon of algebraic independence  from generating 
series in $\mathfrak L_d$ to their $q$-analogs. 

\begin{thm}\label{thm: propagation} Let $q$ be a nonzero complex number. 
Let $g_1(\mathbf{x}),\dots,g_n(\mathbf{x})$ be power series in $\mathfrak{L}_d$, which are algebraically independent over $\mathbb{C}(\mathbf{x})$. Then for any $f_i(q;\mathbf{x})$ in $\mathcal{D}(q;g_i)$,  $1\leq i\leq  n$, the series $f_1(q;\mathbf{x}),\dots,f_n(q;\mathbf{x})$ are also 
algebraically independent over $\mathbb{C}(\mathbf{x})$.
\end{thm}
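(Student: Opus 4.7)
We argue by contraposition. Suppose there is a nonzero polynomial $P\in \mathbb{C}[\mathbf{x},T_1,\dots,T_n]$ with $P(\mathbf{x}; f_1(q;\mathbf{x}),\dots,f_n(q;\mathbf{x}))=0$ in $\mathbb{C}[[\mathbf{x}]]$. The goal is to exhibit a nontrivial algebraic relation among $g_1,\dots,g_n$, contradicting the hypothesis.

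The central mechanism is the observation that, for every integer $b\ge 1$, a primitive $b$-th root of unity $\zeta$ satisfies $\phi_b(\zeta)=0$, so the congruence of Definition~\ref{def: lastdrsb} becomes the exact identity $f_i(\zeta;\mathbf{x}) = A_{i,b}(\zeta;\mathbf{x})\,g_i(\mathbf{x}^b)$ in $\mathbb{C}[[\mathbf{x}]]$, with each $A_{i,b}(\zeta;\mathbf{x})$ nonzero (since $f_i$ is nonzero). In the case where $q$ itself is a primitive $b$-th root of unity, substituting this identity into the assumed relation yields
\[
P(\mathbf{x}; A_{1,b}(q;\mathbf{x})\,g_1(\mathbf{x}^b),\dots,A_{n,b}(q;\mathbf{x})\,g_n(\mathbf{x}^b))=0,
\]
which is a nontrivial polynomial relation on $g_1(\mathbf{x}^b),\dots,g_n(\mathbf{x}^b)$ over $\mathbb{C}(\mathbf{x})$. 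Since $\mathbb{C}(\mathbf{x})/\mathbb{C}(\mathbf{x}^b)$ is finite algebraic, substituting $\mathbf{x}\mapsto\mathbf{x}^{1/b}$ produces a nontrivial relation on $g_1,\dots,g_n$ over $\mathbb{C}(\mathbf{x})$, the desired contradiction.

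For a general nonzero complex $q$ not a root of unity, the relation does not directly interact with the congruences at a single $b$. The plan here is to lift the picture to the indeterminate-$q$ setting. Writing $S(q;\mathbf{x}):=P(\mathbf{x}; f_1(q;\mathbf{x}),\dots,f_n(q;\mathbf{x}))\in\mathbb{C}[q][[\mathbf{x}]]$, we know $S$ vanishes at the specialization $q=q_0$, while the root-of-unity case already shows that $S(\zeta_b;\mathbf{x})\neq 0$ whenever $\zeta_b$ is a primitive $b$-th root of unity and $P\neq 0$. The aim is to promote the $q_0$-vanishing into the existence of a nonzero lifted polynomial $\widetilde P\in\mathbb{C}[q,\mathbf{x},T_1,\dots,T_n]$ annihilating $(f_1(q;\mathbf{x}),\dots,f_n(q;\mathbf{x}))$ identically in $\mathbb{C}[q][[\mathbf{x}]]$. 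Once such a $\widetilde P$ is in hand, specializing $q\mapsto\zeta_b$ for any $b$ large enough that $\widetilde P(\zeta_b;\mathbf{x};\mathbf{T})$ remains a nonzero polynomial (achievable because $\widetilde P$ is polynomial in $q$) yields an algebraic dependence among the $f_i(\zeta_b;\mathbf{x})$, contradicting the root-of-unity case.

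I expect the principal obstacle to be precisely this lifting step: arguing that an algebraic relation valid only at one non-root-of-unity $q=q_0$ forces a relation that is polynomial in $q$. A successful argument should exploit the $\mathbb{Z}[q]$-coefficient structure of the $f_i$ together with the simultaneous availability of the congruences modulo $\phi_b(q)$ for all $b\ge 1$, which impose rigid compatibility constraints on the $q$-dependence of $S$ and should ultimately force $\widetilde P$ to exist. Handling this carefully---perhaps via a Noetherian or linear-algebra reduction on bounded-degree relations, combined with the density of roots of unity among the zeros of the $\phi_b$---is where the bulk of the technical effort will lie.
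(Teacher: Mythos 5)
Your root-of-unity case is correct and is in fact more direct than what the paper does there: when $q$ is a primitive $b$th root of unity, $\phi_b(q)=0$, the congruence of Definition~\ref{def: lastdrsb} degenerates to the exact identity $f_i(q;\mathbf{x})=A_i(q;\mathbf{x})g_i(\mathbf{x}^b)$ with $A_i(q;\mathbf{x})\neq 0$, and the substitution $T_i\mapsto A_i(q;\mathbf{x})S_i$ is invertible over $\mathbb{C}(\mathbf{x})$, so a relation among the $f_i$ descends to one among the $g_i(\mathbf{x}^b)$ and hence (since $\mathbb{C}(\mathbf{x})/\mathbb{C}(\mathbf{x}^b)$ is algebraic) among the $g_i$. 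Your lifting strategy can also be pushed through when $q$ is \emph{transcendental}, because then $\mathbb{Z}[q]\cong\mathbb{Z}[Q]$ canonically and a relation over $\mathbb{C}(\mathbf{x})$ can first be descended to one over $\mathbb{Q}(q)(\mathbf{x})$, lifted to $\mathbb{Z}[Q][\mathbf{x}]$, and then specialized at $Q=\zeta_b$ for all but finitely many $b$.

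The genuine gap is the case where $q$ is algebraic but not a root of unity, say $q=2$. There $\mathbb{Z}[q]$ is not a polynomial ring ($\mathbb{Z}[2]=\mathbb{Z}$), the elements of $\mathcal{D}(q;g_i)$ are single power series attached to the one complex number $q$ rather than members of a $q$-family, and there is simply no indeterminate to lift to: the object $\widetilde P$ you hope to construct does not exist as a matter of definition, and a relation holding at the single point $q$ carries no information about other values of $q$. Your closing appeal to ``rigid compatibility constraints'' and ``density of roots of unity among the zeros of the $\phi_b$'' does not engage with this; the zeros of $\phi_b$ \emph{are} the primitive $b$th roots of unity, and $q$ is not one of them. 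The paper's mechanism for this case is entirely different and is the heart of the proof: by Proposition~\ref{prop: cycloprime} one finds infinitely many maximal ideals $\mathfrak p$ of the ring of integers $R$ of $\mathbb{Q}(q)$ such that $\phi_b(q)\in\mathfrak p R_{\mathfrak p}$ for some \emph{prime} $b$; reducing modulo $\mathfrak p$ turns the cyclotomic congruence into an exact identity $f_{i|\mathfrak p}=A_{i|\mathfrak p}\,g_{i|\mathfrak p}(\mathbf{x}^b)$ over the finite field $R/\mathfrak p$, where the hypothesis $g_i\in\mathfrak{L}_d$ supplies a Frobenius-type functional equation $g_{i|\mathfrak p}(\mathbf{x})=B_{i|\mathfrak p}(\mathbf{x})g_{i|\mathfrak p}(\mathbf{x}^{p^k})$. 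A Kolchin-type statement (Proposition~\ref{prop: Kolchin}) then forces $\prod_i B_{i|\mathfrak p}^{m_i}$ to be a $(p^k-1)$st power, whence $g_1^{-t_1}\cdots g_n^{-t_n}$ is a rational function of bounded degree modulo infinitely many $\mathfrak p$, and the Dedekind-domain globalization lemma (Lemma~\ref{lem: 0}) lifts this to a genuine algebraic relation among the $g_i$. None of this positive-characteristic reduction appears in your plan, so the case that actually requires work is not addressed.
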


 
It immediately implies that all elements of the set $\left\{f_r(q;x)\,:\, r\geq 2 \right\}$ are algebraically independent over $\mathbb C(x)$ for all nonzero complex numbers $q$. More generally, there is a long  tradition for combinatorists in studying  
$q$-analogs of famous sequences of natural numbers, as the additional variable $q$ gives the opportunity to refine the enumeration of combinatorial objects counted by the $q=1$ case. To some extent, the nature of a generating series reflects the underlying structure of the objects it counts~\cite{BM06}.  By nature, we mean for instance whether the generating series is  rational, algebraic, or $D$-finite. In the same line,  algebraic (in)dependence of generating series can be considered as a reasonable way to  measure how distinct families of combinatorial objects may be (un)related.
\medskip

It is known from~\cite{ABD1} that many generating series $g$ of multisums of factorial ratios belong to $\mathfrak{L}_d$. For such series $g$, we will define $q$-analogs and prove that they lie in the set 
$\mathcal{D}(q;g)$. This will yield at once algebraic independence results, but also many generalizations of Congruence \eqref{qLucasCongruence}. Finding congruences with respect to cyclotomic polynomials is actually not a recent problem 
(see for instance~\cite{Sa, GZ, Pa} and the references cited there).

Our second result below is a general congruence relation extending~\eqref{qLucasCongruence} to the multidimensional case, by considering  $q$-factorial ratios in the spirit of the ones in~\cite{WZ}. For  positive integers $d,u,v$, let $e=(\mathbf{e}_1,\dots,\mathbf{e}_u)$ and $f=(\mathbf{f}_1,\dots,\mathbf{f}_v)$ be tuples of vectors in $\mathbb{N}^d$. For $\mathbf{n}\in\mathbb{N}^d$, we 
define a $q$-analog of multidimensional factorial ratios (see Section~\ref{sec:notation} for precise notations) by:
$$
\mathcal{Q}_{e,f}(q;\mathbf{n}):=\frac{[\mathbf{e}_1\cdot\mathbf{n}]_q!\cdots[\mathbf{e}_u\cdot\mathbf{n}]_q!}{[\mathbf{f}_1\cdot\mathbf{n}]_q!\cdots[\mathbf{f}_v\cdot\mathbf{n}]_q!} \, \cdot
$$
Furthermore, we consider the Landau step function $\Delta_{e,f}$ defined on $\mathbb{R}^d$ by 
$$
\Delta_{e,f}(\mathbf{x}):=\sum_{i=1}^u\lfloor\mathbf{e}_i\cdot\mathbf{x}\rfloor-\sum_{j=1}^v\lfloor\mathbf{f}_j\cdot\mathbf{x}\rfloor.
$$
We also define $|e|=\sum_{i=1}^u\mathbf{e}_i$, $|f|=\sum_{j=1}^v\mathbf{f}_j$, and  set: 
$$
\mathcal{D}_{e,f}:=\big\{\mathbf{x}\in[0,1)^d\,:\,\textup{there is $\mathbf{t}$ in $e$ or $f$ such that $\mathbf{t}\cdot\mathbf{x}\geq 1$}\big\}.
$$
\begin{prop}\label{prop: congq}
Let $e$ and $f$ be two tuples of vectors in $\mathbb{N}^d$ such that $|e|=|f|$ and $\Delta_{e,f}$ is greater than or equal to $1$ on $\mathcal{D}_{e,f}$. Then, for every positive integer $b$, every $\mathbf{a}$ in $\{0,\dots,b-1\}^d$ and every $\mathbf{n}$ in $\mathbb{N}^d$, we have $\mathcal{Q}_{e,f}(q;\mathbf{n})\in\mathbb{Z}[q]$ and
$$
\mathcal{Q}_{e,f}(q;\mathbf{a}+\mathbf{n}b)\equiv\mathcal{Q}_{e,f}(q;\mathbf{a})\mathcal{Q}_{e,f}(1;\mathbf{n})\mod\phi_b(q)\mathbb{Z}[q].
$$ 
\end{prop}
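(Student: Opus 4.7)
The starting point of my plan is the cyclotomic factorization
$$
[m]_q! \;=\; \prod_{d \geq 2} \phi_d(q)^{\lfloor m/d \rfloor},
$$
which translates the definition into $\mathcal{Q}_{e,f}(q; \mathbf{n}) = \prod_{d \geq 2} \phi_d(q)^{\Delta_{e,f}(\mathbf{n}/d)}$. Integrality follows immediately: since $|e|=|f|$, the step function $\Delta_{e,f}$ is $\mathbb{Z}^d$-periodic, and on $[0,1)^d$ it vanishes outside $\mathcal{D}_{e,f}$ (all floor terms are $0$) while being $\geq 1$ inside by hypothesis. The same identity also settles the trivial case $\mathbf{a}/b \in \mathcal{D}_{e,f}$ of the congruence: periodicity gives $\Delta_{e,f}((\mathbf{a}+b\mathbf{n})/b) = \Delta_{e,f}(\mathbf{a}/b) \geq 1$, so $\phi_b(q)$ divides both $\mathcal{Q}_{e,f}(q;\mathbf{a})$ and $\mathcal{Q}_{e,f}(q;\mathbf{a}+b\mathbf{n})$, and the congruence reduces to $0\equiv 0$.

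The substance of the proof is the remaining case $\mathbf{a}/b \notin \mathcal{D}_{e,f}$, equivalently $\mathbf{t}\cdot\mathbf{a} < b$ for every $\mathbf{t}$ in $e \cup f$. For any such $\mathbf{t}$, setting $A = \mathbf{t}\cdot\mathbf{a}$ and $N = \mathbf{t}\cdot\mathbf{n}$, grouping multiples of $b$ in $[bN+A]_q!$ via the identity $[jb]_q = [b]_q [j]_{q^b}$ yields
$$
[bN+A]_q! \;=\; [b]_q^{N}\,[N]_{q^b}! \prod_{\substack{1\leq k\leq bN+A\\ b\nmid k}}[k]_q.
$$
Since $q^b \equiv 1 \pmod{\phi_b(q)}$, I obtain $[j]_{q^b} \equiv j$ and $[bj+i]_q \equiv [i]_q$ for $1\leq i\leq b-1$, hence
$$
[bN+A]_q!\,/\,[b]_q^{N} \;\equiv\; N!\cdot([b-1]_q!)^{N}\cdot[A]_q! \pmod{\phi_b(q)}.
$$
Applying this reduction to each factorial appearing in $\mathcal{Q}_{e,f}(q;\mathbf{a}+b\mathbf{n})$ and using $|e|=|f|$ to cancel the $[b]_q^{|e|\cdot\mathbf{n}}$ and $([b-1]_q!)^{|e|\cdot\mathbf{n}}$ contributions between numerator and denominator produces
$$
\mathcal{Q}_{e,f}(q;\mathbf{a}+b\mathbf{n}) \cdot \prod_j (\mathbf{f}_j\cdot\mathbf{n})!\,[\mathbf{f}_j\cdot\mathbf{a}]_q! \;\equiv\; \prod_i (\mathbf{e}_i\cdot\mathbf{n})!\,[\mathbf{e}_i\cdot\mathbf{a}]_q! \pmod{\phi_b(q)}.
$$

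To finish I pass to the integral domain $\mathbb{Z}[q]/\phi_b(q) \cong \mathbb{Z}[\zeta_b]$ and divide by the product on the left, which is nonzero there: each $(\mathbf{f}_j\cdot\mathbf{n})!$ is a positive integer, and each $[\mathbf{f}_j\cdot\mathbf{a}]_q!$ is a product of $[k]_q$ with $1\leq k<b$, none of which vanishes at a primitive $b$th root of unity. What remains is precisely $\mathcal{Q}_{e,f}(q;\mathbf{a})\,\mathcal{Q}_{e,f}(1;\mathbf{n})$, as required. The main technical obstacle is that $[b]_q$ is itself zero modulo $\phi_b(q)$, so its contributions cannot be reduced naively; they have to be isolated via the decomposition above and cancelled between numerator and denominator using $|e|=|f|$ \emph{before} any reduction modulo $\phi_b(q)$ takes place.
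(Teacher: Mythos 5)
Your proof is correct and takes essentially the same route as the paper's: the cyclotomic factorization $\mathcal{Q}_{e,f}(q;\mathbf{n})=\prod_{d\geq 2}\phi_d(q)^{\Delta_{e,f}(\mathbf{n}/d)}$ handles integrality and the case $\Delta_{e,f}(\mathbf{a}/b)\geq 1$, while the main case $\mathbf{a}/b\notin\mathcal{D}_{e,f}$ rests on separating the multiples of $b$, using $|e|=|f|$ to cancel the vanishing $[b]_q$-factors, and reducing modulo $\phi_b(q)$. The only (cosmetic) difference is organizational: you decompose each $q$-factorial in one shot and then cancel a nonzero factor in the integral domain $\mathbb{Z}[q]/(\phi_b(q))\cong\mathbb{Z}[\xi_b]$, whereas the paper factors $\mathcal{Q}(x;\mathbf{a}+\mathbf{n}b)$ through $\mathcal{Q}(x;\mathbf{n}b)$ in two stages and evaluates pole-free rational expressions at $\xi_b$.
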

Proposition~\ref{prop: congq} extends many known results, both for $q$-analogs and $p$-Lucas congruences. For instance, choosing $d=1$, $u=1$, $v=2$, $e_1=2$, and $f_1=f_2=1$  yields~\eqref{qLucasCongruence}, while taking $b$ prime and $q=1$ allows one to recover Proposition~8.7  in \cite{ABD1}. As we will see in Section~\ref{sec:qcongruences}, Proposition~\ref{prop: congq} also leads to congruences for (multi-)sums of $q$-factorial ratios. 
As an illustration, we provide below two examples connected to the famous Ap\'ery sequences.

\begin{prop}\label{prop:qApery}
Consider for a given nonnegative integer $t$ the following $q$-analogs of the Ap\'ery sequences
$$
a_n(q):=\sum_{k=0}^nq^{t k}\qbi{n}{k}{q}^2\qbi{n+k}{k}{q}\quad\textup{and}\quad b_n(q):=\sum_{k=0}^nq^{t k}\qbi{n}{k}{q}^2\qbi{n+k}{k}{q}^2.
$$ 
Then, for all nonnegative integers $n,m,b$ with $b\geq 1$, $0\leq m\leq b-1$, we have 
$$
a_{m+nb}(q)\equiv a_{m}(q)a_{n}(1)\mod \phi_b(q)\mathbb{Z}[q]\quad\textup{and}\quad b_{m+nb}(q)\equiv b_{m}(q)b_{n}(1)\mod \phi_b(q)\mathbb{Z}[q].$$
\end{prop}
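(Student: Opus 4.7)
The plan is to recognise each summand as a $q$-factorial ratio $\mathcal{Q}_{e,f}$ of the type handled by Proposition~\ref{prop: congq}, and then to split the outer summation according to a base-$b$ decomposition of its index. With the two-dimensional variable $\mathbf{n}'=(k,n-k)$, a direct manipulation of $q$-factorials gives
\[
\qbi{n}{k}{q}^{2}\qbi{n+k}{k}{q}=\mathcal{Q}_{e,f}(q;\mathbf{n}'),\qquad
\qbi{n}{k}{q}^{2}\qbi{n+k}{k}{q}^{2}=\mathcal{Q}_{\tilde e,\tilde f}(q;\mathbf{n}'),
\]
with $e=((1,1),(2,1))$, $f=((1,0),(1,0),(1,0),(0,1),(0,1))$ and $\tilde e=((2,1),(2,1))$, $\tilde f=((1,0),(1,0),(1,0),(1,0),(0,1),(0,1))$. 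In both cases $|e|=|f|$, and the set $\mathcal{D}_{e,f}\cap[0,1)^{2}$ is the triangle $\{2x_1+x_2\geq 1\}$, on which $\lfloor 2x_1+x_2\rfloor\geq 1$; one deduces $\Delta_{e,f}\geq 1$ (and $\Delta_{\tilde e,\tilde f}\geq 2$), so Proposition~\ref{prop: congq} applies to each summand.

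Write $n=m+Nb$ with $0\leq m\leq b-1$ and $N\geq 0$. Every pair $(k,n-k)\in\mathbb{N}^{2}$ admits a unique writing $(k,n-k)=(k_0,m')+(K,M)b$ with $(k_0,m')\in\{0,\dots,b-1\}^{2}$ and $(K,M)\in\mathbb{N}^{2}$; the identity $k+(n-k)=m+Nb$ forces either $k_0+m'=m$ with $K+M=N$ (\emph{case 1}), or $k_0+m'=m+b$ with $K+M=N-1$ (\emph{case 2}). Since $\phi_b(q)\mid q^b-1$, one has $q^{t(k_0+Kb)}\equiv q^{tk_0}\mod\phi_b(q)$, and Proposition~\ref{prop: congq} gives termwise
\[
\mathcal{Q}_{e,f}(q;(k_0,m')+(K,M)b)\equiv \mathcal{Q}_{e,f}(q;(k_0,m'))\,\mathcal{Q}_{e,f}(1;(K,M))\mod\phi_b(q).
\]
Summing the case-1 contributions factorises exactly as $a_m(q)\,a_N(1)$ (respectively $b_m(q)\,b_N(1)$), since the sums over $(k_0,m')$ with $k_0+m'=m$ and over $(K,M)$ with $K+M=N$ are the defining sums of $a_m(q)$ and $a_N(1)$.

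It remains to show that the case-2 contribution vanishes modulo $\phi_b(q)$. After applying Proposition~\ref{prop: congq}, each such term contains the factor $\mathcal{Q}_{e,f}(q;(k_0,m'))$, and in particular the subfactor $\qbi{k_0+m'}{k_0}{q}^{2}=\qbi{m+b}{k_0}{q}^{2}$, with $m<k_0\leq b-1$. Since $k_0<b$ and $m+b-k_0<b$, neither $[k_0]_q!$ nor $[m+b-k_0]_q!$ contains a factor of $[b]_q$, whereas $[m+b]_q!$ contains exactly one; using $\phi_b(q)\mid [b]_q$, we conclude $\phi_b(q)\mid\qbi{m+b}{k_0}{q}$, so $\phi_b(q)^{2}$ divides the whole term, which therefore vanishes modulo $\phi_b(q)$. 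The only real subtlety is this combinatorial bookkeeping together with the cyclotomic vanishing of the case-2 terms; everything else is a mechanical application of Proposition~\ref{prop: congq}, and the $b_n(q)$ case follows by exactly the same argument.
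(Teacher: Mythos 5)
Your proof is correct, and it coincides with the paper's only up to the identification of the summands as $\mathcal{Q}_{e,f}(q;(k,n-k))$ with the same exponent tuples (your $e,f$ and $\tilde e,\tilde f$ match the paper's choices up to reordering); your direct verification that $\Delta_{e,f}(\mathbf{x})=\lfloor x_1+x_2\rfloor+\lfloor 2x_1+x_2\rfloor\geq 1$ on $\mathcal{D}_{e,f}=\{\mathbf{x}\in[0,1)^2: 2x_1+x_2\geq 1\}$ replaces the paper's citation of Proposition~2 of \cite{DelaygueAp} and is perfectly adequate here. Where you genuinely diverge is in passing from the termwise congruence of Proposition~\ref{prop: congq} to the congruence for the sums: the paper packages the double sum into the generating function $F_{e,f}(q;x,y)$, specializes via Proposition~\ref{prop:specialisationcongruence} with $\mathbf{t}=(t,0)$ and $\mathbf{m}=(1,1)$ (a lemma it states without proof in this extended abstract), and reads off the coefficientwise statement from the degree bound on $B(q;x)$; you instead extract coefficients by hand, splitting $(k,n-k)=(k_0,m')+(K,M)b$ into the carry-free case $k_0+m'=m$, which reassembles into $a_m(q)a_N(1)$, and the carry case $k_0+m'=m+b$, which you kill via $\phi_b(q)\mid\qbi{m+b}{k_0}{q}$. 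That vanishing argument is exactly the coefficient-level content of the hypothesis of Proposition~\ref{prop:specialisationcongruence} (that $\mathbf{m}\cdot\mathbf{x}\geq 1$ forces $\Delta_{e,f}(\mathbf{x})\geq 1$, hence $\phi_b$-divisibility of the corresponding $\mathcal{Q}_{e,f}(q;\mathbf{a})$), so in effect you have supplied a self-contained proof of the special case $\mathbf{m}=(1,1)$ of that proposition. What your route buys is a fully elementary and verifiable argument; what the paper's formulation buys is uniformity over arbitrary specializations $\mathbf{m}$ and $\mathbf{t}$, which it needs elsewhere (e.g.\ for Proposition~\ref{prop:indepqApery}).
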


Setting 
$$
F_{e,f}(q;\mathbf{x}):=\sum_{\mathbf{n}\in\mathbb{N}^d}\mathcal{Q}_{e,f}(q;\mathbf{n})\mathbf{x}^{\mathbf{n}}
$$
and assuming the conditions of Proposition~\ref{prop: congq}, we obtain that  
$F_{e,f}(q;\mathbf{x})$ belongs to $\mathcal{D}(q;F_{e,f}(1;\mathbf{x}))$, as 
 $F_{e,f}(1;\mathbf{x})$ is in $\mathfrak{L}_d$ by \cite[Proposition~8.7]{ABD1}. Theorem~\ref{thm: propagation} therefore implies that algebraic independence among series $F_{e,f}(1;\mathbf{x})$ propagates to their corresponding $q$-analogs $F_{e,f}(q;\mathbf{x})$. As noticed before, this 
 holds for the series $g_r(x)$ and their $q$-analogs $f_r(q;x)$. 
Proposition~\ref{prop: congq} and a result about specializations of the series 
$F_{e,f}(q;\mathbf{x})$ (stated in Section~\ref{sec:qcongruences}) actually provide 
much more general results, such as the following one.

\begin{prop}\label{prop:indepqApery}
For a fixed nonzero complex number $q$, let $\mathcal F_q$ be the set formed by the union of the three following sets: 
$$
\left\{\sum_{n=0}^\infty \sum_{k=0}^n\qbi{n}{k}{q}^r x^n\,: \,r\geq 3\right\},\quad\left\{\sum_{n=0}^\infty
\sum_{k=0}^n\qbi{n}{k}{q}^r\qbi{n+k}{k}{q}^r x^n\,: \,r\geq 2\right\}
$$
and
$$
\left\{\sum_{n=0}^\infty \sum_{k=0}^n\qbi{n}{k}{q}^{2r}\qbi{n+k}{k}{q}^r x^n\,: \,r\geq 1\right\} \, .
$$
Then all elements of $\mathcal F_q$ are algebraically independent over $\mathbb C(x)$.
\end{prop}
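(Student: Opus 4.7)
My overall strategy is to apply Theorem~\ref{thm: propagation}, reducing the claim to algebraic independence over $\mathbb{C}(x)$ of the $q=1$ specializations of the series in $\mathcal{F}_q$. To invoke Theorem~\ref{thm: propagation}, I must first show that each $f(q;x)\in\mathcal{F}_q$ lies in $\mathcal{D}(q;g)$ for some $g\in\mathfrak{L}_1$; the natural candidate is $g(x)=f(1;x)$.

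Every summand appearing in $\mathcal{F}_q$ is a bivariate $q$-factorial ratio of the form $\mathcal{Q}_{e,f}(q;\cdot)$. Indeed, after the re-parameterisation $a=n-k\geq 0$ and using $\qbi{n}{k}{q}=\qbi{a+k}{k}{q}$, the basic atom fits the setting of Proposition~\ref{prop: congq} with $e=((1,1))$ and $f=((1,0),(0,1))$, both in $\mathbb{N}^2$. Taking the appropriate multiplicities and products then encodes the three families $\qbi{n}{k}{q}^r$, $\qbi{n}{k}{q}^r\qbi{n+k}{k}{q}^r$, and $\qbi{n}{k}{q}^{2r}\qbi{n+k}{k}{q}^r$ as $\mathcal{Q}_{e,f}(q;(a,k))$ for explicit tuples $(e,f)$ with $|e|=|f|$ (for instance, the second family uses $r$ copies of $(1,1)$ and $(1,2)$ in $e$, and $r$ copies each of $(1,0)$, $(1,1)$ together with $2r$ copies of $(0,1)$ in $f$). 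A direct inspection, of the kind carried out for the $q=1$ analogs in~\cite{ABD1}, confirms the Landau condition $\Delta_{e,f}\geq 1$ on $\mathcal{D}_{e,f}$ in every case. Proposition~\ref{prop: congq} then yields $F_{e,f}(q;x_1,x_2)\in\mathcal{D}(q;F_{e,f}(1;x_1,x_2))$, and the specialization result announced in Section~\ref{sec:qcongruences} allows me to collapse the two variables into a single $x$, delivering $f(q;x)\in\mathcal{D}(q;f(1;x))$ with $f(1;x)\in\mathfrak{L}_1$ by~\cite[Proposition~8.7]{ABD1}.

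The principal obstacle is then to establish the algebraic independence over $\mathbb{C}(x)$ of the union of the three families evaluated at $q=1$. My plan is to adapt the argument from~\cite{AB13,ABD1} used for the series $\{g_r\}_{r\geq 2}$: assume a nontrivial polynomial relation among finitely many of these series, exploit the $p$-Lucas property coming from membership in $\mathfrak{L}_1$ to iterate the Frobenius-like substitution $x\mapsto x^p$, and derive a contradiction from the asymptotic growth of the coefficients. Each family has its own characteristic coefficient growth---dominated by $4^{rn}$ (up to polynomial factors) for the Franel-type first family, by the $r$th power of the dominant root of the Apéry recurrence for the second family, and by a distinct mixed rate for the third---and within each family the exponent is strictly monotone in $r$. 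This uniform separation of growth rates should allow the contradiction argument of~\cite{ABD1} to run in the present setting and yield the full algebraic independence at $q=1$. Combined with Theorem~\ref{thm: propagation}, this concludes the proof for every nonzero complex~$q$.
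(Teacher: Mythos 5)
Your reduction to the case $q=1$ is essentially the paper's: encode each summand as a two-variable $q$-factorial ratio $\mathcal{Q}_{e,f}(q;\cdot)$ with $|e|=|f|$, check the Landau condition, apply Proposition~\ref{prop: congq} and then the specialization result (Proposition~\ref{prop:specialisationcongruence}, with $\mathbf{t}=\mathbf{0}$ and $\mathbf{m}=(1,1)$) to conclude $f(q;x)\in\mathcal{D}(q;f(1;x))$ with $f(1;x)\in\mathfrak{L}_1$, and finish with Theorem~\ref{thm: propagation}. One small caveat there: Proposition~\ref{prop:specialisationcongruence} requires the slightly stronger hypothesis that $\Delta_{e,f}(\mathbf{x})\geq 1$ whenever $\mathbf{m}\cdot\mathbf{x}\geq 1$ on $[0,1)^2$, not merely $\Delta_{e,f}\geq 1$ on $\mathcal{D}_{e,f}$; you should say you are verifying that version for each family.

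The genuine gap is in your treatment of the $q=1$ case. The paper does not reprove the algebraic independence of the series in $\mathcal{F}_1$; this is exactly Proposition~1.2 of \cite{ABD1} (together with Section~9.3 there), and the proof of Proposition~\ref{prop:indepqApery} simply cites it. Your proposed substitute --- deriving a contradiction ``from the asymptotic growth of the coefficients,'' on the grounds that the three families have pairwise distinct growth rates, monotone in $r$ --- does not work. Distinct exponential growth rates do not imply algebraic independence: for instance $1/(1-2x)$ and $1/(1-3x)$ have coefficient growth $2^n$ and $3^n$ yet are algebraically dependent over $\mathbb{C}(x)$, being both algebraic. Moreover, the actual argument of \cite{AB13,ABD1} is not asymptotic at all: it reduces a hypothetical algebraic relation modulo infinitely many primes, applies a Kolchin-type statement (Proposition~\ref{prop: Kolchin} here) to produce a multiplicative relation $A_1^{m_1}\cdots A_n^{m_n}=r(\mathbf{x})^{p^k-1}$ among the truncation polynomials coming from the $p$-Lucas congruences, and then rules out such a relation by a careful analysis of those polynomials modulo $p$. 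If you want a self-contained proof you would have to carry out that analysis for the Franel- and Ap\'ery-type truncations; as written, this step of your sketch is not a proof, and the intended (and efficient) move is to quote the known $q=1$ result and let Theorem~\ref{thm: propagation} do the rest.
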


Proposition~\ref{prop:indepqApery}  is derived from Proposition 1.2 in \cite{ABD1}, which corresponds to the case $q=1$. 

In the next section, we  fix some notation and recall  basic facts about Dedekind domains. In Section~\ref{sec:qcongruences}, we focus on congruence relations modulo cyclotomic polynomials and prove Proposition~\ref{prop: congq}. We also show how to derive results like Propositions~\ref{prop:qApery} and~\ref{prop:indepqApery}. Finally,  the last section is devoted to a sketch of proof of Theorem~\ref{thm: propagation}.


\section{Background and notations}\label{sec:notation}

Let us introduce some notation and basic facts that will be used throughout this extended abstract.  
Let $d$ be a positive integer. Given $d$-tuples of real numbers $\mathbf{m}=(m_1,\dots,m_d)$ and $\mathbf{n}=(n_1,\dots,n_d)$, 
 we set $\mathbf{m}+\mathbf{n}:=(m_1+n_1,\dots,m_d+n_d)$ and  
$\mathbf{m}\cdot\mathbf{n}:=m_1n_1+\cdots+m_dn_d$. If moreover  $\lambda$ is a real number, then we set 
$\lambda\mathbf{m}:=(\lambda m_1,\dots, \lambda m_d)$. 
We write $\mathbf{m}\geq\mathbf{n}$ if   we have 
$m_k\geq n_k$ for all $k$ in $\{1,\dots,d\}$. We also set $\mathbf{0}:=(0,\dots,0)$ and $\mathbf{1}:=(1,\dots,1)$. 

{\it Polynomials}. Given a   $d$-tuple of natural numbers $\mathbf{n}=(n_1,\dots,n_d)$ and a vector of 
indeterminates  $\mathbf{x}=(x_1,\dots,x_d)$, 
we will denote by ${\bf x}^{\bf n}$  the monomial $x_1^{n_1}\cdots x_d^{n_d}$.  The (total) degree 
of such a monomial is the nonnegative integer $n_1+\cdots + n_d$.  
Given a ring $R$ and a 
polynomial $P$ in $R[{\bf x}]$, we denote by $\deg P$ the (total) degree of $P$, that is 
the maximum of the total degrees of the monomials appearing in $P$ with nonzero coefficient.   
The partial degree of $P$ with respect to the indeterminate $x_i$ 
is denoted by $\deg_{x_i}(P)$. 

 {\it Algebraic power series and algebraic independence}. Let $K$ be a field.  We denote by $K[[{\bf x}]]$ the ring of formal power series with coefficients 
in $K$ and associated with the vector of indeterminates ${\bf x}$. 
We say that a power series $f({\bf x})\in K[[{\bf x}]]$ 
is algebraic if it is algebraic over the field of rational functions $K({\bf x})$, 
that is,  if there exist polynomials $A_0,\ldots, A_m$ in ${K}[{\bf x}]$, not all zero, such that $A_0({\bf x})+A_1({\bf x})f({\bf x})+\cdots+A_m({\bf x})f({\bf x})^m=0$. 
Otherwise, $f$ is said to be transcendental. 
Let $f_1,\ldots,f_n$ be in $K[[{\bf x}]]$. We say that $f_1,\ldots,f_n$ are algebraically dependent 
if they are algebraically dependent over the field $K({\bf x})$, that is, if there exists a nonzero polynomial 
$P(Y_1,\ldots,Y_n)$ in $K[{\bf x}][Y_1,\ldots,Y_n]$ 
such that $P(f_1,\ldots,f_n)=0$. 
When there is no algebraic relation between them, the power series $f_1,\ldots,f_n$ are said to be algebraically independent 
(over $K({\bf x})$). 

{\it Dedekind domains.} Let $R$ be a Dedekind domain; that is, $R$ is Noetherian, integrally closed,  
and every nonzero prime ideal of $R$ is a maximal ideal. Furthermore, 
any nonzero element of $R$ belongs to at most a finite number of maximal ideals of $R$. 
In other words, given an infinite set $\mathcal S$ of maximal ideals of $R$, then one always has 
$\bigcap_{\mathfrak p\in \mathcal S} \mathfrak p =\{0\}$. For every power series $f(\mathbf{x})=\sum_{\mathbf{n}\in\mathbb{N}^d} a(\mathbf{n})\mathbf{x}^{\mathbf{n}}$ with coefficients in $R$, we set
$$f_{|\mathfrak p}(\mathbf{x}):=\sum_{\mathbf{n}\in\mathbb{N}^d}\big(a(\mathbf{n})\bmod \mathfrak p\big)\mathbf{x}^{\mathbf{n}}\in 
(R/\mathfrak p)[[\mathbf{x}]] \, .$$ 
The power series $f_{|\mathfrak p}$ is called the reduction of $f$ modulo $\mathfrak p$. 
Let  $K$ denote the field of fractions of $R$. 
The localization of $R$ at a maximal ideal $\mathfrak p$ is denoted by $R_{\mathfrak p}$. 
Recall here that $R_{\mathfrak p}$ can be seen as the following subset of $K$: 
$$
R_{\mathfrak p}= \left\{a/b \,:\, a\in R, b\in R\setminus  \mathfrak p\right\}.
$$ 
Then $R_{\mathfrak p}$ is a discrete valuation ring and the residue field $R_{\mathfrak p}/\mathfrak p$ is equal to $R/\mathfrak p$.

\section{Some general congruences and appplications}\label{sec:qcongruences}

We first give the detailed proof of Proposition~\ref{prop: congq}, and we will then see how to derive results like Propositions~\ref{prop:qApery} and~\ref{prop:indepqApery}.

\begin{proof}[Proof of Proposition~\ref{prop: congq}]
In this proof, we write $\mathcal{Q}$ for $\mathcal{Q}_{e,f}$, $\Delta$ for $\Delta_{e,f}$ and $\mathcal{D}$ for $\mathcal{D}_{e,f}$.
Recall  that for all nonnegative integers $n$ we have 
\begin{equation*}\label{eq:qfactphi}
\frac{1-q^n}{1-q}=\prod_{b\mid n, b\geq 2}\phi_b(q)\;\Rightarrow\;[n]_q!=\prod_{b=2}^n\phi_b(q)^{\lfloor n/b\rfloor},
\end{equation*}
from which we deduce, by definition of the step function $\Delta$,  
\begin{equation}\label{eq: poly}
\mathcal{Q}(q;\mathbf{n})=\prod_{b=2}^\infty\phi_b(q)^{\Delta(\mathbf{n}/b)}.
\end{equation}
As $|e|=|f|$, the function $\Delta$ is $1$-periodic in each of its variable and one easily obtains from~\eqref{eq: poly} that $\mathcal{Q}(q;\mathbf{n})$ is in $\mathbb{Z}[q]$ for every $\mathbf{n}$ in $\mathbb{N}^d$ if, and only if $\Delta$ is nonnegative over $\mathbb{R}^d$. This proves the first part of our proposition.

  Let $x$ be a complex variable. As $|e|=|f|$, we derive
$$
\mathcal{Q}(x;\mathbf{a}+\mathbf{n}b)=\mathcal{Q}(x;\mathbf{n}b)\frac{\prod_{i=1}^u\prod_{k=1}^{\mathbf{e}_i\cdot\mathbf{a}}(1-x^{\mathbf{e}_i\cdot\mathbf{n}b+k})}{\prod_{j=1}^v\prod_{k=1}^{\mathbf{f}_j\cdot\mathbf{a}}(1-x^{\mathbf{f}_j\cdot\mathbf{n}b+k})}.
$$
If $\mathbf{a}/b$ is not in $\mathcal{D}$, then for each $\mathbf{t}$ in $e$ or $f$, no element of $\{1,\dots,\mathbf{t}\cdot\mathbf{a}\}$ is divisible by $b$. Hence, if $\xi_b$ is a complex primitive $b$th root of unity, then we have
$$
\mathcal{Q}(\xi_b;\mathbf{a}+\mathbf{n}b)=\mathcal{Q}(\xi_b;\mathbf{n}b)\frac{\prod_{i=1}^u\prod_{k=1}^{\mathbf{e}_i\cdot\mathbf{a}}(1-\xi_b^{k})}{\prod_{j=1}^v\prod_{k=1}^{\mathbf{f}_j\cdot\mathbf{a}}(1-\xi_b^{k})}=\mathcal{Q}(\xi_b;\mathbf{n}b)\mathcal{Q}(\xi_b;\mathbf{a}),
$$
so that
\begin{equation}\label{eq: inter1}
\mathcal{Q}(x;\mathbf{a}+\mathbf{n}b)\equiv\mathcal{Q}(x;\mathbf{n}b)\mathcal{Q}(x;\mathbf{a})\mod \phi_b(x)\mathbb{Z}[x].
\end{equation}
We shall prove that this congruence also holds when $\mathbf{a}/b$ belongs to $\mathcal{D}$. Indeed, in this case we have $\Delta(\mathbf{a}/b)\geq 1$ by assumption. By~\eqref{eq: poly}, the $\phi_b(x)$-valuation of $\mathcal{Q}(x;\mathbf{a}+\mathbf{n}b)$ is $\Delta(\frac{\mathbf{a}}{b}+\mathbf{n})=\Delta(\mathbf{a}/b)\geq 1$, and the $\phi_b(x)$-valuation of $\mathcal{Q}(x;\mathbf{a})$ is also $\Delta(\mathbf{a}/b)\geq 1$. Hence both polynomials are divisible by $\phi_b(x)$ and~\eqref{eq: inter1} holds.

Now we shall prove that
\begin{equation}\label{eq: inter2}
\mathcal{Q}(x;\mathbf{n}b)\equiv\mathcal{Q}(1;\mathbf{n})\mod \phi_b(x)\mathbb{Z}[x].
\end{equation}
We have
$$
\mathcal{Q}(x;\mathbf{n}b)=\frac{\prod_{i=1}^u\prod_{k=1}^{\mathbf{e}_i\cdot\mathbf{n}b}(1-x^k)}{\prod_{j=1}^v\prod_{k=1}^{\mathbf{f}_j\cdot\mathbf{n}b}(1-x^k)}=\frac{\prod_{i=1}^u\prod_{k=1}^{\mathbf{e}_i\cdot\mathbf{n}}(1-x^{kb})}{\prod_{j=1}^v\prod_{k=1}^{\mathbf{f}_j\cdot\mathbf{n}}(1-x^{kb})}\times\prod_{\ell=1}^{b-1}\frac{\prod_{i=1}^u\prod_{k=1}^{\mathbf{e}_i\cdot\mathbf{n}-1}(1-x^{\ell+kb})}{\prod_{j=1}^v\prod_{k=1}^{\mathbf{f}_j\cdot\mathbf{n}-1}(1-x^{\ell+kb})} \, \cdot
$$
From $|e|=|f|$, we also derive
$$
\frac{\prod_{i=1}^u\prod_{k=1}^{\mathbf{e}_i\cdot\mathbf{n}}(1-x^{kb})}{\prod_{j=1}^v\prod_{k=1}^{\mathbf{f}_j\cdot\mathbf{n}}(1-x^{kb})}=\frac{\prod_{i=1}^u\prod_{k=1}^{\mathbf{e}_i\cdot\mathbf{n}}\frac{1-x^{kb}}{1-x^b}}{\prod_{j=1}^v\prod_{k=1}^{\mathbf{f}_j\cdot\mathbf{n}}\frac{1-x^{kb}}{1-x^b}} 
\, ,
$$
which is a rational fraction without pole at $x=\xi_b$ and whose value at $\xi_b$ equals $\mathcal{Q}(1;\mathbf{n})$. Furthermore, for each $\ell$ in $\{1,\dots,b-1\}$, we have
$$
\frac{\prod_{i=1}^u\prod_{k=1}^{\mathbf{e}_i\cdot\mathbf{n}-1}(1-\xi_b^{\ell+kb})}{\prod_{j=1}^v\prod_{k=1}^{\mathbf{f}_j\cdot\mathbf{n}-1}(1-\xi_b^{\ell+kb})}=(1-\xi_b^\ell)^{(|e|-|f|)\cdot\mathbf{n}}=1.
$$
Since $\mathcal{Q}(x;\mathbf{n}b)\in\mathbb{Z}[x]$ and $\mathcal{Q}(\xi_b;\mathbf{n}b)=\mathcal{Q}(1;\mathbf{n})$, we obtain~\eqref{eq: inter2} as expected. 

\end{proof}

We now show how Proposition~\ref{prop: congq} yields on the one hand congruences through a specialization rule, and on the other hand algebraic independence results. 

Recall that, under the conditions of Proposition~\ref{prop: congq}, we have 
$$
F_{e,f}(q;\mathbf{x})=\sum_{\mathbf{n}\in\mathbb{N}^d}\mathcal{Q}_{e,f}(q;\mathbf{n})\mathbf{x}^{\mathbf{n}}\in \mathbb{Z}[q][[\mathbf{x}]]\,.
$$
Then the congruence relation in Proposition~\ref{prop: congq} is equivalent to 
$$
F_{e,f}(q;\mathbf{x})\equiv A(q;{\bf x})F_{e,f}(1;\mathbf{x}^b)\mod\phi_b(q)\mathbb{Z}[q][[{\bf x}]]
$$
for every positive integer $b$ and with the additional condition that $A(q;{\bf x})$ in $\mathbb{Z}[q][{\bf x}]$ satisfies $\deg_{x_i}A(q;{\bf x})\leq b-1$ for all $i$, $1\leq i\leq d$. The following proposition is the key to prove congruences for multisums of $q$-factorial ratios as in Proposition \ref{prop:indepqApery}.

\begin{prop}\label{prop:specialisationcongruence}
Assume the conditions of Proposition~\ref{prop: congq} are satisfied. Moreover, let $\bf{t}\in\mathbb{N}^d$ and ${\bf m}\in\mathbb{N}^d$ be such that  if $\mathbf{x}$ in $[0,1)^d$ satisfies ${\bf m}\cdot{\bf x}\geq1$, then $\Delta_{e,f}({\bf x})\geq1$. Then, for every positive integer $b$, we have: 
$$
F_{e,f}(q;q^{t_1}x^{m_1},\dots,q^{t_d}x^{m_d})\equiv B(q;x)F_{e,f}(1;x^{bm_1},\dots,x^{bm_d})\mod\phi_b(q)\mathbb{Z}[q][[{\bf x}]],
$$
where $B(q;x)$ is a one variable  polynomial in $\mathbb{Z}[q][x]$ satisfying $\deg_{x}B(q;x)\leq b-1$.
\end{prop}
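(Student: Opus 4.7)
The plan is to expand $F_{e,f}(q;q^{t_1}x^{m_1},\ldots,q^{t_d}x^{m_d})$ in powers of $x$, split the summation index $\mathbf{n}\in\mathbb{N}^d$ in base $b$ as $\mathbf{n}=\mathbf{a}+b\mathbf{k}$ with $\mathbf{a}\in\{0,\ldots,b-1\}^d$ and $\mathbf{k}\in\mathbb{N}^d$, and then reduce modulo $\phi_b(q)$ using the two ingredients provided by Proposition~\ref{prop: congq} together with the identity $q^b\equiv 1\mod\phi_b(q)\mathbb{Z}[q]$. Concretely, substituting $x_i=q^{t_i}x^{m_i}$ into the definition of $F_{e,f}$ gives
$$
F_{e,f}(q;q^{t_1}x^{m_1},\ldots,q^{t_d}x^{m_d})=\sum_{\mathbf{a}\in\{0,\ldots,b-1\}^d}\sum_{\mathbf{k}\in\mathbb{N}^d}\mathcal{Q}_{e,f}(q;\mathbf{a}+b\mathbf{k})\,q^{\mathbf{t}\cdot\mathbf{a}}\,q^{b\,\mathbf{t}\cdot\mathbf{k}}\,x^{\mathbf{m}\cdot\mathbf{a}+b\,\mathbf{m}\cdot\mathbf{k}}.
$$
Replacing $\mathcal{Q}_{e,f}(q;\mathbf{a}+b\mathbf{k})$ by $\mathcal{Q}_{e,f}(q;\mathbf{a})\mathcal{Q}_{e,f}(1;\mathbf{k})$ modulo $\phi_b(q)$ via Proposition~\ref{prop: congq}, and killing $q^{b\,\mathbf{t}\cdot\mathbf{k}}$ modulo $\phi_b(q)$, the inner sum over $\mathbf{k}$ reassembles as $F_{e,f}(1;x^{bm_1},\ldots,x^{bm_d})$, while the outer sum over $\mathbf{a}$ yields the candidate prefactor
$$
\widetilde{B}(q;x):=\sum_{\mathbf{a}\in\{0,\ldots,b-1\}^d}\mathcal{Q}_{e,f}(q;\mathbf{a})\,q^{\mathbf{t}\cdot\mathbf{a}}\,x^{\mathbf{m}\cdot\mathbf{a}}\in\mathbb{Z}[q][x].
$$

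The remaining task is to replace $\widetilde{B}(q;x)$, modulo $\phi_b(q)$, by a polynomial $B(q;x)$ in $\mathbb{Z}[q][x]$ of degree at most $b-1$ in $x$. This is exactly where the compatibility hypothesis between $\mathbf{m}$ and $\Delta_{e,f}$ intervenes. Indeed, for any $\mathbf{a}\in\{0,\ldots,b-1\}^d$ with $\mathbf{m}\cdot\mathbf{a}\geq b$, the point $\mathbf{a}/b$ lies in $[0,1)^d$ and satisfies $\mathbf{m}\cdot(\mathbf{a}/b)\geq 1$, whence $\Delta_{e,f}(\mathbf{a}/b)\geq 1$ by assumption. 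The cyclotomic factorization~\eqref{eq: poly} obtained in the proof of Proposition~\ref{prop: congq}, namely $\mathcal{Q}_{e,f}(q;\mathbf{a})=\prod_{c\geq 2}\phi_c(q)^{\Delta_{e,f}(\mathbf{a}/c)}$ in $\mathbb{Z}[q]$, then forces $\phi_b(q)$ to divide $\mathcal{Q}_{e,f}(q;\mathbf{a})$, so the associated monomial of $\widetilde{B}$ vanishes modulo $\phi_b(q)$. Setting
$$
B(q;x):=\sum_{\substack{\mathbf{a}\in\{0,\ldots,b-1\}^d\\ \mathbf{m}\cdot\mathbf{a}\leq b-1}}\mathcal{Q}_{e,f}(q;\mathbf{a})\,q^{\mathbf{t}\cdot\mathbf{a}}\,x^{\mathbf{m}\cdot\mathbf{a}}\in\mathbb{Z}[q][x]
$$
then yields a polynomial of degree at most $b-1$ in $x$ with $B\equiv\widetilde{B}\mod\phi_b(q)$, and the proposition follows.

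The main obstacle is precisely this last degree control. The substitution and base-$b$ bookkeeping are essentially formal once Proposition~\ref{prop: congq} and $q^b\equiv 1\mod\phi_b(q)$ are in hand; what makes the statement nontrivial is recognizing that the monomials of $\widetilde{B}(q;x)$ with $\mathbf{m}\cdot\mathbf{a}\geq b$ automatically disappear modulo $\phi_b(q)$, and this is exactly what the hypothesis on $\mathbf{m}$ is tailored to guarantee via the explicit cyclotomic factorization of the $q$-factorial ratios $\mathcal{Q}_{e,f}(q;\mathbf{a})$.
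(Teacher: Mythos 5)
Your argument is correct, and it is the natural proof within the paper's framework: the paper (an extended abstract) does not actually print a proof of this proposition, but your base-$b$ splitting of the index, the termwise use of Proposition~\ref{prop: congq} together with $q^b\equiv 1\bmod\phi_b(q)$, and the degree control via the cyclotomic factorization~\eqref{eq: poly} (which forces $\phi_b(q)\mid\mathcal{Q}_{e,f}(q;\mathbf{a})$ whenever $\mathbf{m}\cdot\mathbf{a}\geq b$) are exactly the ingredients the authors set up for this purpose. The only implicit point worth flagging is that the substitution $x_i\mapsto q^{t_i}x^{m_i}$ is a well-defined element of $\mathbb{Z}[q][[x]]$ only when every $m_i\geq 1$ (so that each coefficient is a finite sum), an assumption the statement itself tacitly makes.
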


Choosing $e=((2,1),(1,1))$ and $f=((1,0),(1,0),(1,0),(0,1),(0,1))$, we get that
$$
F_{e,f}(q;x,y)=\sum_{n_1,n_2\geq0}\frac{[2n_1+n_2]_q![n_1+n_2]_q!}{[n_1]_q!^3[n_2]_q!^2}x^{n_1}y^{n_2}\, .
$$ 
By Proposition 2 in \cite{DelaygueAp}, the function $\Delta_{e,f}$ is greater than or equal to $1$ on 
$\mathcal{D}_{e,f}$ so that the conditions of Proposition \ref{prop: congq} are satisfied. Furthermore, we can use Proposition~\ref{prop:specialisationcongruence} with $\mathbf{t}=(t,0)$ and $\mathbf{m}=(1,1)$ which yields
$$
F_{e,f}(q;q^{t}x,x)\equiv B(q;x)F_{e,f}(1;x^{b},x^{b})\mod\phi_b(q)\mathbb{Z}[q][[{\bf x}],
$$
where $B(q;x)$ is a polynomial in $\mathbb{Z}[q][x]$ satisfying $\deg_{x}B(q;x)\leq b-1$. A direct computation shows that
$$
F_{e,f}(q;q^{t}x,x)=\sum_{n=0}^\infty\sum_{k=0}^nq^{tk}\qbi{n}{k}{q}^2\qbi{n+k}{k}{q}x^n
$$
and
$$
F_{e,f}(1;x,x)=\sum_{n=0}^\infty\sum_{k=0}^n\binom{n}{k}^2\binom{n+k}{k}x^n \,.
$$
This yields the congruences for $q$-analogs of the first Ap\'ery sequence $a_n(q)$ 
given in Proposition~\ref{prop:qApery}. The result for the second Ap\'ery sequence $b_n(q)$ is derived along the same line.

To prove Proposition~\ref{prop:indepqApery}, we first show by Proposition \ref{prop:specialisationcongruence} that each series $f(q;x)$ in $\mathcal{F}_q$ belongs to $\mathcal{D}(q;f(1;x))$. For example, we use the following specialization associated with $\mathbf{t}=(0,0)$ and $\mathbf{m}=(1,1)$: 
$$
\sum_{n=0}^\infty\sum_{k=0}^n\qbi{n}{k}{q}^rx^n=F_{e,f}(q;x,x)\, ,
$$   
where
$$
F_{e,f}(q;x,y)=\sum_{n_1,n_2\geq 0}\frac{[n_1+n_2]_q!^r}{[n_1]_q!^r[n_2]_q!^r}x^{n_1}y^{n_2}\,.
$$
By Proposition 1.2 and Section 9.3 in \cite{ABD1}, we know that $\mathcal{F}_1$ (the set of all series $f(1;x)$) is a subset of $\mathfrak{L}_1$ and that all elements of $\mathcal{F}_1$ are algebraically independent over $\mathbb{C}(x)$. Hence Theorem \ref{thm: propagation} implies that, for every nonzero complex number $q$, all elements of $\mathcal{F}_q$ are algebraically independent over $\mathbb{C}(x)$.

\section{Sketch of proof of Theorem \ref{thm: propagation} }\label{sec:proofpropagation}

Though Theorem \ref{thm: propagation} holds true for all nonzero complex number $q$, 
we will focus here on the case where $q$ is an algebraic number.   
The case where $q$ is transcendental is actually simpler even if it requires specific 
considerations we do not want to deal with here for space limitation.  

Throughout this section, we fix a nonzero algebraic number $q$. 
We let $K$ be the number field $\mathbb Q(q)$ and  
 $R:=\mathcal O(K)$ be its ring of integers. Recall that $R$ is thus a Dedekind domain. 

The proof of Theorem \ref{thm: propagation}  relies on  the following Kolchin-like proposition 
which is a special instance of Proposition~$4.3$ in \cite{ABD1}.

\begin{prop}\label{prop: Kolchin}
Let $p$ be a prime number, $F$ be a finite extension of degree $d_{p}$ of $\mathbb F_p$, 
and $k$ be a positive integer such that $d_p\mid k$.  
Let $f_1,\ldots,f_n$ be nonzero power series in $F[[{\bf x}]]$ satisfying 
$f_i({\bf x})= A_i({\bf x})f_i({\bf x}^{p^k})$ for some $A_i\in F[{\bf x}]$ and every $1\leq i\leq n$. If $f_1,\dots,f_n$ satisfy 
a nontrivial polynomial relation of degree $d$ with coefficients in $F({\bf x})$, 
 then there exist 
$m_1,\dots,m_n\in\mathbb{Z}$, not all zero, and a nonzero $r({\bf x})\in F({\bf x})$ such that
$$
A_1({\bf x})^{m_1}\cdots A_n({\bf x})^{m_n}=r({\bf x})^{p^k-1} \, .
$$ 
Furthermore,  $|m_1+\cdots+m_n|\leq d$ and $|m_i|\leq d$ for $1\leq i\leq n$.
\end{prop}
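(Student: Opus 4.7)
The approach I would take is a Kolchin-style argument in positive characteristic, exploiting that the hypothesis $d_p \mid k$ forces the $p^k$-power Frobenius to fix $F$ pointwise. The starting point is to clear denominators and rewrite the given relation as $P(\mathbf{x},f_1,\ldots,f_n)=0$ with $P \in F[\mathbf{x}][Y_1,\ldots,Y_n]$ of total $Y$-degree at most $d$, then choose such a nontrivial $P$ whose support $S \subseteq \mathbb{N}^n$ has \emph{minimal cardinality}. Since each $f_i$ is nonzero, no relation of the form $c_\alpha(\mathbf{x})Y^\alpha=0$ can hold on $(f_1,\ldots,f_n)$, so $|S|\geq 2$; I can then fix $\alpha_0 \in S$ and normalize $c_{\alpha_0}=1$, working from that point on in $F(\mathbf{x})[Y_1,\ldots,Y_n]$.

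The second step is to produce a companion annihilator of the same shape. Substituting $\mathbf{x}\mapsto\mathbf{x}^{p^k}$ in the relation and using the functional equation in the form $f_i(\mathbf{x}^{p^k})=f_i(\mathbf{x})/A_i(\mathbf{x})$, then rescaling by $\prod_i A_i(\mathbf{x})^{\alpha_{0,i}}$, one obtains
$$
\widetilde Q(\mathbf{x},Y) := Y^{\alpha_0} + \sum_{\alpha \in S \setminus \{\alpha_0\}} c_\alpha(\mathbf{x}^{p^k})\, \prod_i A_i(\mathbf{x})^{\alpha_{0,i}-\alpha_i}\, Y^\alpha ,
$$
which also annihilates $(f_1,\ldots,f_n)$, is supported in $S$, and has the same leading monomial $Y^{\alpha_0}$ as $P$. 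The difference $P-\widetilde Q$ is therefore a relation supported in $S\setminus\{\alpha_0\}$, so by minimality it must vanish, yielding for each $\alpha\in S\setminus\{\alpha_0\}$ the identity
$$
c_\alpha(\mathbf{x}) = c_\alpha(\mathbf{x}^{p^k})\,\prod_i A_i(\mathbf{x})^{\alpha_{0,i}-\alpha_i}.
$$

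The final step converts this into the desired multiplicative relation, and it is here that the hypothesis $d_p \mid k$ is used in an essential way: since the $p^k$-power map acts trivially on $F$, every $c \in F(\mathbf{x})$ satisfies $c(\mathbf{x}^{p^k}) = c(\mathbf{x})^{p^k}$. Applied to $c_\alpha$, the identity above rearranges to
$$
\prod_i A_i(\mathbf{x})^{\alpha_i - \alpha_{0,i}} = c_\alpha(\mathbf{x})^{p^k-1},
$$
so setting $m_i := \alpha_i - \alpha_{0,i}$ and $r(\mathbf{x}) := c_\alpha(\mathbf{x})$ gives the identity claimed in the statement. The exponents $m_i$ are not all zero because $\alpha \neq \alpha_0$, and the bounds $|m_i|\leq d$ and $|m_1+\cdots+m_n|\leq d$ follow from the total-degree constraint $|\alpha|,|\alpha_0|\leq d$.

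The step I expect to be the main obstacle is the production of the companion annihilator $\widetilde Q$: the negative exponents $\alpha_{0,i}-\alpha_i$ force us out of $F[\mathbf{x}][Y]$ into $F(\mathbf{x})[Y]$, and the substitution $f_i(\mathbf{x}^{p^k})=f_i(\mathbf{x})/A_i(\mathbf{x})$ has to be carried out in the fraction field of $F[[\mathbf{x}]]$ (nothing forces $A_i$ to be a unit in $F[[\mathbf{x}]]$). A secondary subtlety is that the minimal-support selection must survive the clearing-of-denominators and normalization steps; the cleanest way around this is to select the minimal support directly within $F(\mathbf{x})[Y_1,\ldots,Y_n]$ from the outset.
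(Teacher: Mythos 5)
Your proposal is correct: the minimal-support trick, the companion annihilator obtained from the Frobenius substitution $f_i(\mathbf{x}^{p^k})=f_i(\mathbf{x})/A_i(\mathbf{x})$, and the use of $d_p\mid k$ to get $c(\mathbf{x}^{p^k})=c(\mathbf{x})^{p^k}$ are exactly the ingredients of the standard Kolchin-style argument. The paper itself does not reprove this statement but cites it as a special instance of Proposition~4.3 of \cite{ABD1}, whose proof proceeds along essentially the same lines as yours.
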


We will also need the following result which will enable us to connect 
reductions modulo prime numbers and modulo cyclotomic polynomials.

\begin{prop}\label{prop: cycloprime}
There exists an infinite set $\mathcal S$ of maximal ideals of $R$ such that, 
for all $\mathfrak p\in \mathcal S$, we have 
$\mathbb Z[q]\subset R_{\mathfrak p}$ and $\phi_{b}(q)\mathbb Z[q]\subset 
\mathfrak{p}R_{\mathfrak p}$ for some 
prime number $b$ (depending on $\mathfrak p$).
\end{prop}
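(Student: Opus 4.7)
The plan is to reinterpret the divisibility $\phi_b(q) \in \mathfrak{p}R_\mathfrak{p}$ via the multiplicative order of $q$ modulo $\mathfrak{p}$. For a maximal ideal $\mathfrak{p}$ of $R$ lying above a rational prime $p$ and for a prime $b \neq p$, the polynomial $\phi_b$ reduces modulo $p$ to a polynomial with distinct roots in an algebraic closure of $\mathbb{F}_p$ (namely, the primitive $b$-th roots of unity); so, provided $q \in R_\mathfrak{p}$, the condition $\phi_b(q) \in \mathfrak{p}R_\mathfrak{p}$ holds if and only if the residue $\bar q \in R/\mathfrak{p}$ is a primitive $b$-th root of unity, i.e.\ has multiplicative order exactly $b$ in the finite field $R/\mathfrak{p}$. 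The proposition is therefore equivalent to producing infinitely many maximal ideals $\mathfrak{p}$ of $R$ whose associated order $d(\mathfrak{p}) := \mathrm{ord}_{(R/\mathfrak{p})^\times}(\bar q)$ is a prime number.

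First I would tame denominators. Pick $c \in \mathbb{Z}_{>0}$ with $cq \in R$; then for any maximal ideal $\mathfrak{p}$ with $\mathfrak{p} \cap \mathbb{Z} = (p)$ and $p \nmid c$, the element $c$ is a unit in $R_\mathfrak{p}$, so $q = (cq)/c \in R_\mathfrak{p}$ and hence $\mathbb{Z}[q] \subset R_\mathfrak{p}$ automatically. Only finitely many $\mathfrak{p}$ are excluded, so we restrict to the cofinite set of such ``good'' primes.

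For the main case where $q$ is not a root of unity, I would invoke the classical generalization of the Bang--Zsygmondy theorem to rings of integers of number fields (Birkhoff--Vandiver, Schinzel): for every sufficiently large integer $n$, there exists a maximal ideal $\mathfrak{p}_n$ of $R$ — a \emph{primitive prime divisor} of $q^n-1$ — that divides $q^n-1$ but divides none of $q^m-1$ for $1 \leq m < n$; equivalently, $d(\mathfrak{p}_n) = n$ exactly. Restricting $n$ to primes $\ell$ produces, for each sufficiently large prime $\ell$, a good $\mathfrak{p}_\ell$ with $d(\mathfrak{p}_\ell) = \ell$; since different primes $\ell$ yield different orders, the $\mathfrak{p}_\ell$ are pairwise distinct, and collecting them gives the desired infinite $\mathcal{S}$. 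If instead $q$ is a root of unity of prime order $m$, one simply takes $b = m$ and notes that $\bar q$ retains order $m$ modulo every good $\mathfrak{p}$ outside the finite set of $\mathfrak{p}$ containing some proper power $q^{m/\ell} - 1$.

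The main obstacle is the Zsygmondy-type input. A fully self-contained alternative uses the Mahler measure: since $q$ is not a root of unity, Kronecker's theorem gives $M(q) > 1$, and a short estimate then shows $|N_{K/\mathbb{Q}}(\phi_\ell(q))|$ grows like $M(q)^\ell$ as $\ell \to \infty$; on the other hand, the lifting-the-exponent lemma bounds the contribution to that norm supported on primes above $\ell$ by $\ell^{[K:\mathbb{Q}]}$. For $\ell$ large this forces $\phi_\ell(q)$ to admit a prime divisor $\mathfrak{p}_\ell$ whose residue characteristic differs from $\ell$, and for such $\mathfrak{p}_\ell$ one verifies directly (using that $\phi_\ell$ has distinct roots modulo any prime $\neq \ell$) that $\bar q$ is a primitive $\ell$-th root of unity, yielding $d(\mathfrak{p}_\ell) = \ell$ as required. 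The residual case of $q$ a root of unity of composite order is more delicate and would require a separate argument or a mild reformulation of the statement.
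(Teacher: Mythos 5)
The paper itself omits the proof of Proposition~\ref{prop: cycloprime}, recording only that it is elementary when $q$ is a root of unity and relies on the $S$-unit theorem otherwise; so the comparison is with that indicated strategy. Your opening reduction is correct and would be common to both arguments: after discarding the finitely many primes dividing a rational denominator of $q$, the condition $\phi_b(q)\in\mathfrak pR_{\mathfrak p}$ for a prime $b$ different from the residue characteristic is equivalent to $\bar q$ having multiplicative order exactly $b$ in $(R/\mathfrak p)^\times$. From there you diverge: you produce the required primes via primitive prime divisors of $q^\ell-1$ (Zsygmondy over $\mathbb Q$, Postnikova--Schinzel and Schinzel over number fields), whereas the paper's route is presumably this: if only finitely many $\mathfrak p$ had $\bar q$ of prime order, then for every prime $b$ the element $q^b-1$ would be an $S$-unit for one fixed finite set $S$ (every prime divisor of $q^b-1$ has order dividing $b$, hence equal to $1$ or $b$), and the identity $q^b+(1-q^b)=1$ would yield infinitely many solutions of an $S$-unit equation, contradicting that $q$ is not a root of unity. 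Both are legitimate, and neither is cheap: Schinzel's theorem over number fields is itself proved with Baker's method, so you gain nothing in elementarity over the $S$-unit route. Your ``self-contained'' Mahler-measure variant is the weakest link: for a conjugate $\sigma(q)$ on the unit circle that is not a root of unity, the required lower bound $|\sigma(q)^\ell-1|\geq e^{-\epsilon\ell}$ is not a ``short estimate'' --- the elementary Liouville bound only gives $C^{-\ell}$ with $C$ possibly larger than $M(q)$, and one again needs Gelfond--Baker. You should also say how you arrange the coprimality hypothesis in Schinzel's theorem for the pair of algebraic integers $(cq,c)$, though that is routine.

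The genuine gap is exactly the one you flagged: roots of unity of composite order. In fact the statement as literally printed, with $b$ required to be prime, is \emph{false} there. Take $q=i$ and $R=\mathbb Z[i]$: for every odd prime $b$ one has $\phi_b(i)=(i^b-1)/(i-1)$ with $i^b\in\{i,-i\}$, so $\phi_b(i)$ is a unit of $\mathbb Z[i]$, while $\phi_2(i)=1+i$; hence the only admissible maximal ideal is $(1+i)$ and no infinite $\mathcal S$ exists. The resolution is that primality of $b$ is never used in the proof of Theorem~\ref{thm: propagation}: Definition~\ref{def: lastdrsb} supplies the congruence for every integer $b\geq 1$, and $b$ enters the argument only through the substitution $\mathbf x\mapsto\mathbf x^b$. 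With $b$ allowed to be an arbitrary positive integer, the root-of-unity case becomes the paper's ``elementary'' case --- take $b=\operatorname{ord}(q)$, so that $\phi_b(q)=0$ and every maximal ideal $\mathfrak p$ with $q\in R_{\mathfrak p}$ qualifies. You should either prove this corrected statement (your argument for non-roots of unity plus the trivial observation just made) or explicitly restrict to $q$ not a root of unity; as written, the composite-order case is left open and cannot be closed for $b$ prime.
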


For space limitation, Proposition \ref{prop: cycloprime} will not be proved here. 
Its proof is elementary when $q$ is a root of unity and relies on the $S$-unit theorem 
otherwise.  
We will also need the  two following auxiliary results,  the first of which being  
Lemma~$4.4$ in~\cite{ABD1}.

\begin{lem}
\label{lem: 0} Let $R$ be a Dedekind domain, $K$ be its field of fractions, and $g_1,\ldots, g_n$ be
power series in $R[[{\bf x}]]$.  If 
$g_{1 \vert \mathfrak p},\ldots, g_{n \vert \mathfrak p}$ are linearly dependent over 
$R/\mathfrak p$ for infinitely many maximal ideals $\mathfrak p$,   
then $f_1,\ldots, f_n$ are linearly dependent over $K$. 
\end{lem}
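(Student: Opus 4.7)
The plan is to argue the contrapositive: assume $g_1,\ldots,g_n$ are linearly independent over $K$, and show that their reductions are linearly independent over $R/\mathfrak p$ for all but finitely many maximal ideals $\mathfrak p$. Write $g_i(\mathbf{x})=\sum_{\mathbf{n}\in\mathbb N^d}a_i(\mathbf{n})\mathbf{x}^{\mathbf{n}}$ with $a_i(\mathbf{n})\in R$, and form the infinite matrix $M$ whose $i$-th row is the sequence $(a_i(\mathbf{n}))_{\mathbf{n}\in\mathbb N^d}$. Linear independence of the $g_i$ over $K$ is equivalent to the $K$-linear independence of the $n$ rows of $M$, viewed as vectors in $K^{\mathbb N^d}$.

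Since the row-rank of $M$ equals $n$ over $K$, standard linear algebra produces $n$ column indices $\mathbf{n}_1,\ldots,\mathbf{n}_n$ such that the $n\times n$ submatrix $M'=(a_i(\mathbf{n}_j))_{1\le i,j\le n}$ has nonzero determinant $D\in K$. All entries lie in $R$, so $D\in R\setminus\{0\}$. This is the crux of the argument: linear independence over $K$ is witnessed by a single nonzero element of $R$.

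Now I invoke the Dedekind property recalled in Section~\ref{sec:notation}: a nonzero element of $R$ belongs to only finitely many maximal ideals. Hence for all but finitely many maximal ideals $\mathfrak p$ of $R$, the image $D\bmod\mathfrak p$ is nonzero in $R/\mathfrak p$, so the reduced submatrix $M'\bmod\mathfrak p$ has nonzero determinant in the field $R/\mathfrak p$. This forces the $n$ row-vectors $(a_i(\mathbf{n}_1)\bmod\mathfrak p,\ldots,a_i(\mathbf{n}_n)\bmod\mathfrak p)$ to be linearly independent over $R/\mathfrak p$, which in turn implies the linear independence of $g_{1|\mathfrak p},\ldots,g_{n|\mathfrak p}$ over $R/\mathfrak p$. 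Consequently, the set of $\mathfrak p$ for which dependence holds is finite, contradicting the hypothesis.

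There is no real obstacle here; the only point worth care is the passage from ``the $g_i$ are $K$-linearly independent'' to ``some $n\times n$ minor of $M$ is nonzero'', which is just the observation that a rank-$n$ (infinite-column) matrix over a field admits $n$ linearly independent columns. The entire strength of the lemma comes from the Dedekind hypothesis, which converts the single nonzero element $D\in R$ into cofinitely many primes of nondegeneracy.
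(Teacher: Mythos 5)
Your proof is correct. The paper does not actually prove this lemma in the present text --- it is imported as Lemma~4.4 of \cite{ABD1} --- but your argument is the standard one and surely the intended one: witness the $K$-linear independence of $g_1,\ldots,g_n$ by a single nonzero $n\times n$ minor $D\in R$ of the coefficient matrix (legitimate, since a rank-$n$ matrix with $n$ rows over a field has $n$ independent columns), and then use the quoted Dedekind property that a nonzero element of $R$ lies in only finitely many maximal ideals, so $D\bmod\mathfrak p\neq 0$ cofinitely often. The only blemish is in the statement itself, where ``$f_1,\ldots,f_n$'' is a typo for ``$g_1,\ldots,g_n$'', which your contrapositive implicitly corrects.
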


\begin{lem}\label{lem: b}
Let $K$ be a commutative field and set $b$ a positive integer. Let $r(\mathbf{x})\in K(\mathbf{x})$ and $s(\mathbf{x})\in K(\mathbf{x})\cap K[[\mathbf{x}]]$ be two rational fractions such that $s(\mathbf{0})\neq0$. If there exists a nonzero (mod $p$ if $char(K)=p$) integer  $m$ satisfying $s(\mathbf{x}^b)=r(\mathbf{x})^m$, then there exists $t(\mathbf{x})$ in $K(\mathbf{x})$ such that $r(\mathbf{x})=t(\mathbf{x}^b)$. 
\end{lem}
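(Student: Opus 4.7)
I will show $r\in K(\mathbf{x}^b)$, splitting according to whether $\mathrm{char}(K)$ divides $b$. As a preliminary, write $r=u/v$ in lowest terms in the UFD $K[\mathbf{x}]$: since $m$ is invertible in $K$, a standard valuation argument at any height-one prime contained in $(x_1,\dots,x_d)$ shows that $v(\mathbf{0})\ne 0$ (a pole of $r$ there would force a nonzero $\pi$-adic valuation of $s(\mathbf{x}^b)=r^m$, contradicting $s(\mathbf{0})\ne 0$). Thus $r\in K[[\mathbf{x}]]$ and $r(\mathbf{0})^m=s(\mathbf{0})\ne 0$, so $r(\mathbf{0})\ne 0$.

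\emph{Tame case $\mathrm{char}(K)\nmid b$.} Let $L=K(\zeta)$ where $\zeta$ is a primitive $b$-th root of unity. Then $L(\mathbf{x})/L(\mathbf{x}^b)$ is Galois with group $G=(\mathbb{Z}/b\mathbb{Z})^d$ acting by $\sigma_{\mathbf{k}}:x_i\mapsto\zeta^{k_i}x_i$. Applying $\sigma_{\mathbf{k}}$ to $r^m=s(\mathbf{x}^b)$, whose right-hand side is $G$-invariant, gives $(\sigma_{\mathbf{k}}r/r)^m=1$; hence $\sigma_{\mathbf{k}}r/r$ is a constant in $L$ (roots of unity in the purely transcendental extension $L(\mathbf{x})/L$ lie in $L$). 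Evaluating at $\mathbf{0}$ and using $r(\mathbf{0})\ne 0$ forces $\sigma_{\mathbf{k}}r=r$ for every $\sigma_{\mathbf{k}}$, so $r\in L(\mathbf{x})^G=L(\mathbf{x}^b)$. Galois descent along $L/K$ (acting trivially on $\mathbf{x}$) then concludes $r\in K(\mathbf{x}^b)$.

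\emph{Wild case $\mathrm{char}(K)=p\mid b$.} Factor $b=p^eb'$ with $\gcd(p,b')=1$. Since $r^m=\tilde{s}(\mathbf{x}^{b'})$ with $\tilde{s}(\mathbf{y}):=s(\mathbf{y}^{p^e})$, the tame case yields $r=t_1(\mathbf{x}^{b'})$ for some $t_1\in K(\mathbf{y})$ with $t_1^m=s(\mathbf{y}^{p^e})$. By induction on $e$, it suffices to treat $b=p$. For this, extract $p$-th roots of the coefficients of $s$ over $\overline{K}$ to write $s(\mathbf{y}^p)=h^p$ with $h\in\overline{K}(\mathbf{y})$; then $t_1^m=h^p$, and since $\gcd(m,p)=1$, Bezout $am+cp=1$ gives $t_1=(h^a t_1^c)^p$, a $p$-th power in $\overline{K}(\mathbf{y})$. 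By the freshman's dream, every $p$-th power in $\overline{K}(\mathbf{y})$ lies in $\overline{K}(\mathbf{y}^p)$; combined with $t_1\in K(\mathbf{y})$ and the characterization $K(\mathbf{y}^p)=\bigcap_i\ker(\partial/\partial y_i:K(\mathbf{y})\to K(\mathbf{y}))$ (valid in any field of characteristic $p$, and extending to $\overline{K}(\mathbf{y})$), we conclude $t_1\in K(\mathbf{y})\cap\overline{K}(\mathbf{y}^p)=K(\mathbf{y}^p)$.

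The principal obstacle is the wild case: Galois theory is unavailable for the purely inseparable extension $K(\mathbf{x})/K(\mathbf{x}^p)$, so one must bypass it via Frobenius (freshman's dream) together with a descent using derivations to characterize the image. The hypothesis $p\nmid m$ is essential twice over: once in the preliminary regularity argument at $\mathbf{0}$, and once to invoke the Bezout identity between $m$ and $p$ in the Frobenius step.
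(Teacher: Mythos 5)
The paper states Lemma~\ref{lem: b} without proof (this is an extended abstract, and only Proposition~\ref{prop: congq} and a sketch of Theorem~\ref{thm: propagation} are proved), so there is no in-paper argument to compare yours against; I can only assess your proposal on its own terms. It is correct and complete. The overall architecture --- first forcing $r$ to be regular and nonvanishing at $\mathbf{0}$ via $\pi$-adic valuations (using $m\neq 0$ in $\mathbb{Z}$ and the fact that a reduced-form denominator of an element of $K(\mathbf{x})\cap K[[\mathbf{x}]]$ cannot vanish at the origin), then splitting into the tame and wild parts of $b$ --- is the natural one. In the tame case, the Kummer-type Galois action $x_i\mapsto\zeta^{k_i}x_i$ gives $(\sigma r/r)^m=1$, the algebraic closedness of $L$ in $L(\mathbf{x})$ puts $\sigma r/r$ in $L$, and evaluation at $\mathbf{0}$ (legitimate precisely because of your preliminary step) kills the constant; the descent from $L(\mathbf{x}^b)\cap K(\mathbf{x})$ to $K(\mathbf{x}^b)$ via $\mathrm{Gal}(L/K)$ and linear disjointness is sound. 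In the wild case, writing $s(\mathbf{y}^p)=h^p$ over $\overline{K}$ and using B\'ezout with $\gcd(m,p)=1$ to exhibit $t_1$ as a $p$-th power, then descending with the characterization $K(\mathbf{y}^p)=\bigcap_i\ker(\partial/\partial y_i)$, is exactly where the hypothesis $p\nmid m$ is needed, and your identification of that as the crux is accurate. Two presentational quibbles only: when $m<0$ you should say explicitly that one may replace $(s,m)$ by $(1/s,-m)$ (licit since $s(\mathbf{0})\neq0$ makes $s$ a unit in $K[[\mathbf{x}]]$) so that $r(\mathbf{0})^m=s(\mathbf{0})$ genuinely yields $r(\mathbf{0})\neq0$; and the ``standard valuation argument'' that the reduced denominator of $s(\mathbf{x}^b)$ does not vanish at $\mathbf{0}$ deserves a one-line justification (faithful flatness of $K[[\mathbf{x}]]$ over $K[\mathbf{x}]_{(x_1,\dots,x_d)}$, or the usual coprimality argument), since for $d\geq2$ it is not completely immediate. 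Neither affects correctness.
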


\begin{proof}[Proof of Theorem \ref{thm: propagation}] 
Let $\mathcal S$ be the set of maximal ideals of $R:=\mathcal O(K)$ given in Proposition 
\ref{prop: cycloprime}. 
With all $\mathfrak p$ in $\mathcal S$, we associate a prime number $p$ such that the residue field 
$R/\mathfrak p$ is a finite field of characteristic $p$ so that $p\mathbb Z\subset \mathfrak p$. 
Let $d_{\mathfrak p}$ be the degree of the field extension 
$R/\mathfrak p$ over $\mathbb F_p$.  
Since $g_i$ belongs to $\mathcal L_d$, there exists a polynomial $A_{i}\in \mathbb Z[{\bf x}]$ such that 
$$
g_i({\bf x}) \equiv A_{i}({\bf x}) g_i({\bf x}^{p^{k_i}}) \mod \mathfrak p[[\mathbf{x}]]
$$
with $\deg_{x_j} A_{i}\leq p^{k_{i}}-1$. 
We set  $k:=\mathrm{lcm}(d_{\mathfrak p},k_{1},\dots,k_{n})$. 
Then iterating the above relation, for all $i$ in $\{1,\dots,n\}$ and all $\mathfrak p$ in $\mathcal S$, there exists 
$B_{i}(\mathbf{x})$ in $\mathbb Z[{\bf x}]$ satisfying
\begin{equation}\label{eq: Bi}
g_i(\mathbf{x})\equiv B_{i}(\mathbf{x})g_i\big(\mathbf{x}^{p^{k}}\big)\mod \mathfrak p[[\mathbf{x}]],
\end{equation}
with $\deg_{x_j}(B_{i})\leq  p^{k}-1$. 

Now let us assume by contradiction that $f_1,\dots,f_n$ are algebraically dependent over 
 $\mathbb{C}(\mathbf{x})$ and thus over $K(\mathbf{x})$ for the coefficients of the 
 formal power series $f_i$ belong to $K$ (see for instance \cite{ABD1}). 
 Let $Q(\mathbf{x},y_1,\dots,y_n)$ be a nonzero polynomial in 
 $R[\mathbf{x}][y_1,\dots,y_n]$ of total degree at most 
$\kappa$ in $y_1,\dots,y_n$ such that  $
Q\big(\mathbf{x},f_1(q;\mathbf{x}),\dots,f_n(q;\mathbf{x})\big)=0. 
$ 
Since  $f_i\in\mathcal{D}(q;g_i)$, for every $i$ in $\{1,\dots,n\}$,  
Proposition \ref{prop: cycloprime} implies that 
$
f_i(q;\mathbf x)\equiv A_{i}(q;{\bf x}) g_i(\mathbf{x}^{b}) \mod 
\mathfrak pR_{\mathfrak p}[[\mathbf{x}]]$, for some prime $b$.  
Since $Q$ and the series $f_i$ are all nonzero and $R$ is a Dedekind domain, there thus exists 
an infinite subset $\mathcal S'$ of $\mathcal S$ such 
that, for every $\mathfrak p$ in $S'$,  the relation
$$
Q\big(\mathbf{x}, A_{1}(q;{\bf x}) g_1(\mathbf{x}^{b}),\dots, A_{n}(q;{\bf x}) 
g_n(\mathbf{x}^{b})\big)\equiv 0\mod \mathfrak pR_{\mathfrak p}[[\mathbf{x}]]\,$$
provides a nontrivial algebraic relation over $R_{\mathfrak p}/\mathfrak p =R/\mathfrak p$ 
between the series $g_{i\vert {\mathfrak p}}(\mathbf{x}^b)$.  
By \eqref{eq: Bi}, one has 
$g_i(\mathbf{x}^{b})\equiv B_{i}(\mathbf{x}^{b})g_i\big(\mathbf{x}^{bp^{k}}\big)
\mod \mathfrak{p}[[\mathbf{x}]]$ and Proposition~\ref{prop: Kolchin} 
then applies to $g_{1|\mathfrak{p}}(\mathbf{x}^{b}),\dots,g_{n|\mathfrak{p}}(\mathbf{x}^{b})$ by taking 
$F=R/\mathfrak p$.  
 There thus exist integers $m_{1},\dots,m_{n}$, not all zero, 
and a nonzero rational fraction $r(\mathbf{x})$ in $F(\mathbf{x})$ such that
\begin{equation}\label{eq: Bi0}
B_{1|{\mathfrak p}}(\mathbf{x}^{b})^{m_{1}}\cdots B_{n| \mathfrak p}(\mathbf{x}^{b})^{m_{n}}
=r(\mathbf{x})^{p^k-1}.
\end{equation}
As $g_i$ belongs to $\mathcal{L}_d$, the constant coefficient in the left-hand side of \eqref{eq: Bi0} is equal to $1$.  By Lemma~\ref{lem: b}, as $p^k-1\neq0$ mod $p$, there exists a rational fraction $u(\mathbf{x})$ in $F(\mathbf{x})$ such that $r(\mathbf{x})=u(\mathbf{x}^{b})$ 
and we obtain that
 $B_{1| \mathfrak p}(\mathbf{x})^{m_{1}}\cdots B_{n| \mathfrak p}(\mathbf{x})^{m_{n}}=
u(\mathbf{x})^{p^k-1}$. Furthermore, we have $|m_1+\cdots+m_n|\leq \kappa$ and 
$|m_i|\leq \kappa$ for $1\leq i\leq n$.
Note that the rational fractions $B_i$, $u$ and the integers $m_i$ all depend on $\mathfrak{p}$. 
However, since all the integers $m_i$ belong to a finite set, the pigeonhole principle implies the existence 
of  an infinite subset $\mathcal{S}''$ of $\mathcal{S}'$ and of integers $t_1,\dots,t_n$
such that, for all $\mathfrak p$ in $\mathcal{S}''$, we have $m_i=t_i$ for $1\leq i \leq n$.  
We can thus assume      that $\mathfrak p$  belongs to $\mathcal{S}''$   
and write $u(\mathbf{x})=s(\mathbf{x})/t(\mathbf{x})$ with $s(\mathbf{x})$ and $t(\mathbf{x})$ 
in $F[{\bf x}]$ and coprime.  
Since $\deg B_{i}\leq  p^k-1$, the degrees of $s({\bf x})$ and $t({\bf x})$ are bounded by 
$|t_1|+\cdots+|t_n|\leq n\kappa $.
Set $h(\mathbf{x}):=g_1(\mathbf{x})^{-t_1}\cdots g_n(\mathbf{x})^{-t_n}\in  \mathbb{Z}[[\mathbf{x}]]
\subset R[[{\bf x}]]$. Then we obtain that 
\begin{align*}
h_{| \mathfrak{p}}\big(\mathbf{x}^{p^k}\big)&= g_{1|\mathfrak{p}}\big(\mathbf{x}^{p^k}\big)^{-t_1}
\cdots g_{n|\mathfrak{p}}\big(\mathbf{x}^{p^k}\big)^{-t_n}\\
&=g_{1|\mathfrak{p}}(\mathbf{x})^{-t_1}\cdots g_{n|\mathfrak{p}}(\mathbf{x})^{-t_n}B_{1|\mathfrak{p}} (\mathbf{x})^{t_1}\cdots B_{n|\mathfrak{p}}(\mathbf{x})^{t_n}\\
&=h_{|\mathfrak{p}}(\mathbf{x})u(\mathbf{x})^{p^k-1}.
\end{align*}
Since $h_{|\mathfrak p}$ is nonzero, we obtain that 
$h|_\mathfrak{p}(\mathbf{x})^{p^k-1}=u(\mathbf{x})^{p^k-1}$ and there exists $a$ 
in a suitable algebraic extension of 
$F$  
such that $h_{|\mathfrak p}(\mathbf{x})=au(\mathbf{x})$.  As the coefficients of 
$h_{|\mathfrak p}$ and $u$ belong to $R/\mathfrak{p}$, we get $a\in R/\mathfrak{p}$.   
Thus for infinitely many maximal ideals $\mathfrak p$, the reduction modulo 
$\mathfrak p$ of the power series $x_i^mh({\bf x})$ and $x_i^m$, $1\leq i\leq n$, 
$0\leq m\leq n\kappa$, are linearly dependent over $R/\mathfrak p$. Since $R$ is a Dedekind domain, 
Lemma~\ref{lem: 0}  
implies that these power series are linearly dependent over $K$, which means that 
$h(\mathbf{x})$ belongs to $K({\bf x})$. This is a contradiction as $g_1,\dots,g_n$ are algebraically independent over $\mathbb{C}(\mathbf{x})$.
\end{proof}






\medskip

\begin{small}
\address{B. Adamczewski, \'E. Delaygue and F. Jouhet, Univ Lyon, Universit\'e Claude Bernard Lyon 1, CNRS UMR 5208, Institut Camille Jordan, F-69622 Villeurbanne Cedex, France. Emails: boris.adamczewski@math.cnrs.fr, delaygue@math.univ-lyon1.fr and jouhet@math.univ-lyon1.fr}
\medskip

\address{J. P. Bell, Department of Pure Mathematics, University of Waterloo, Waterloo, ON, Canada. Email: jpbell@uwaterloo.ca}

\end{small}

\end{document}